\def \supcom { C_0 ^{\infty} (\erren)}
\newcommand{\erre} {{\mathbb {R}}}
\newcommand{\enne} {{\mathbb {N}}}
\def\erren{{\erre^{ {n} }}}
\def\supp{\mathrm{supp\, }}
\def\inn{\mbox{ in }}
\def\fe {\mbox{ for every  }}
\def\ass{\mbox{ as }}
\def\andd{ \quad\mbox{ and } \quad }
\newcommand{\tende}{\rightarrow}
\newcommand{\ttende}{\longrightarrow}
\newcommand\eps{\varepsilon}
\newcommand\partiali{\partial_{x_i}}
\newcommand\partialj{\partial_{x_j}}
\newcommand {\elle} {{\mathcal {L}}}
\newtheorem{theorem}{Theorem}[section]
\newtheorem{lemma}[theorem]{Lemma}
\theoremstyle{remark}
\newtheorem{remark}[theorem]{Remark}
\theoremstyle{definition}
\newtheorem{definition}[theorem]{Definition}
\theoremstyle{remark}
\numberwithin{equation}{section}
  \title[Asymptotic average solutions for second order hypoeliptic PDEs]{Asymptotic average solutions\\  for second order hypoelliptic PDEs}
\author{Alessia E. Kogoj}
\address{Dipartimento di Scienze Pure e Applicate (DiSPeA)\\ 
				 Universit\`{a} degli Studi di Urbino Carlo Bo\\
				 Piazza della Repubblica 13, 61029 Urbino (PU), Italy.}
\email{alessia.kogoj@uniurb.it}
\subjclass[2020]{35K65, 35K70, 35H10, 35H20, 35D99, 35B05, 35D30}
\keywords{asymptotic mean value formulae, subelliptic operators, ultraparabolic operators, hypoelliptic operators, Poisson-type equations, Pizzetti's formula}
\begin{document}

\begin{abstract}  
Following an analogous procedure with that used in  \cite{kogoj_lanconelli_pizzetti}, in turn inspired by a 1909 paper by Pizzetti \cite{pizzetti}, we introduce the notion of {\it asymptotic average solutions} for hypoelliptic linear partial differential operators with non-negative characteristic form.
This notion makes every Poisson equation $\elle u(x)=-f(x)$ with continuous data $f$ pointwise solvable.

\end{abstract}
    
\maketitle

\section{Introduction}

 We are dealing with partial differential operators of the type

\begin{equation} \label{operators} 
  \elle = \sum_{i,j=1}^n \partiali (a_{ij}(x) \partialj ) +  \sum_{i=1}^n b_{i}(x) \partiali \inn \erren,
\end{equation}
where the coefficients $a_{ij}$ and  $b_{i}$ are smooth functions in $\erren$. \\ The matrix $A=(a_{ij}), i,j=1,\ldots,n$ is supposed to be symmetric and non-negative definite at any point and not totally degenerate, that is  $\inf_\erren a_{11} >0$.
\\
We also assume that the operator $\elle$ is hypoelliptic and endowed with a non-negative smooth fundamental solution 
$$\Gamma: \{ (x,y)\in\erren \times \erren\  | x \neq y \} \ttende \erre,$$ such that 

\begin{itemize}
\item[{\rm(i)}] for any fixed  $x\in\erren$,
$\Gamma(\cdot,x)$ and
 $\Gamma(x,\cdot)$ belong to  $L^1_{\rm loc}(\erren)$;
 \item[{\rm(ii)}]for every  $\varphi\in\supcom$ and $x\in\erren$, \[\elle
\int_{\erren}
 \Gamma (x,y)\varphi(y)\ d y  \ =\ \int_{\erren}
 \Gamma (x,y)\elle\varphi(y)\ d y \ =\ -\varphi(x);\]

\item[{\rm(iii)}]  $\displaystyle\limsup_{x\tende y}\Gamma(x,y) =\limsup_{y\tende x}\Gamma(x,y) = \infty;$

\item[{\rm(iv)}] $\displaystyle\lim_{|x|\tende \infty} \left(\sup_{|y|\le M} \Gamma(x,y)\right) =\lim_{|y|\tende \infty} \left(\sup_{|x|\le M} \Gamma(x,y)\right) =0$, \\ for every real positive  $M.$
\end{itemize}
Then, given $x\in\erren$ and $r>0$, we define the \textit{$\elle$-ball of center $x$ and radius $r$} as
\begin{equation}\label{lballs}\Omega_r(x) =\left\{y \in\erren\,\,:\,\,\Gamma(x,y)>\frac{1}{r}\right\},\end{equation} and we assume that the following Poisson--Jensen-type  representation formula holds for any function $u\in C^2(\Omega)$, where $\Omega$
is an open subset of $\erren$. For every $\elle$-ball $\Omega_r(x)$ such that $\overline{\Omega_r(x)}\subseteq\Omega$ we have 

 \begin{eqnarray} \label{PJ1} 
 u(x)&=& \frac{1}{r} \int_{\Omega_r(x)} \!\!\! u(y) \frac{ \langle A(y) \nabla_y \Gamma (x,y),  \nabla_y \Gamma (x,y)\rangle}{(\Gamma(x,y))^{2}}\ dy \\ & &\nonumber - \frac{1}{r} \int_0^r \left( \int_{\Omega_\rho(x)} \left(\Gamma(x,y) - \frac{1}{\rho}\right) \elle (u)(y)\  dy \right)d\rho. \end{eqnarray}
We will write  \begin{eqnarray}\label{PJ2}
u(x)=M_r(u)(x)- N_r(\elle u) (x).\end{eqnarray}

All the properties we require are satisfied, in particular, by the classical heat operator, the heat operators on Carnot groups, the Kolmogorov operators studied in \cite{lanconelli_polidoro_1994}, the operators
constructed with the link procedure introduced in \cite{link} and the more general class of hypoelliptic ultraparabolic operators introduced and studied in \cite{kogoj_lanconelli_2004}, which contains all the previous ones.

For this wide class of operators, a fundamental solution with good properties containing the ones required in this work has been constructed in \cite{kogoj_lanconelli_2004} and representation formulae of Poisson--Jensen-type have been proved in  \cite{cintiPJ}. We also have to remark that Kogoj and Tralli have given asymptotic characterizations of the subsolutions in \cite{kt}. Moreover, we mention the works \cite{pascuccipolidoro2006},\cite{boscainpolidoro2007},\cite{kogoj_lanconelli_2007},\cite{cintiuniqueness2009},\cite{cintilanconelliPJ2009}, \cite{CNP2010},\cite{ kogoj_pinchover_polidoro2016},\cite{kogoj_2017}, where Riesz and Poisson-Jensen representation formulae, Gaussian estimates of the fundamental solution, Harnack-type inequalities, Liouville-type theorems, the Cauchy and the Dirichlet problems have been studied for this class of operators.

In this paper, by using the above Poisson--Jensen formula \eqref{PJ1}, we give at first the following definition of  {\it asymptotic average solution} to $\elle u=f$.

 \begin{definition}\label{definizioneuno} Let us define $Q_r(x)=N_r(1)(x)$. Let $u$ and $f$ be continuous functions in an open set $\Omega\subseteq\erren$. We say that $u$ is an {asymptotic average solution} to $\elle u=f$  if 
\begin{equation*} 
 \elle_a u(x):=\lim_{r\tende 0}  \frac{M_r(u)(x)- u(x)}{Q_r(x)} 
\end{equation*}
exists in $\erre$ at every point $x\in\Omega$  and 
\begin{equation*} 
\elle_a u(x)=f(x) \fe  x\in\Omega. \end{equation*}
\end{definition}
This definition completes the one given by Kogoj and Lanconelli in the recent paper \cite{kogoj_lanconelli_pizzetti} - where operators of the second order just in principal part are considered - and the one given by Gutierrez and Lanconelli in \cite{gl2004} for a class of second order operators with non negative characteristic form with drift in the case $f=0$. When $\elle$ is the classical Laplacian, it coincides with the one given by Pizzetti in 1909 in  in his paper on the average of the values that a function takes on the surface of a sphere \cite{pizzetti}.

Furthermore, we will observe that if $u$ is a compactly supported continuous function 
and is an asymptotic average solution to $\elle_a u =f \inn\erren$, then, \begin{center}$\elle_a u =f \inn\erren$ if and only if $ \elle u = f$ in the weak sense of the distributions. \end{center}

In recent years, mean value formulae that characterize classical or viscosity solutions to both linear and nonlinear second order PDEs have been studied by several authors. We refer to the works \cite{gl2004,manfredi2010,manfredi2010tug,manfredi2012,ferrari2015,ferraripinamonti2015,manfredilindqvist2016,magnanini2017,mohammed2021,manfredistroffolini2021,manfredi2022} and the references therein.

Our definition of  asymptotic average solution enables us to solve the Poisson problem $$\elle_a u=-f\inn\erren.$$ 
We state and prove the following Pizzetti-type result. \begin{theorem}\label{mainth} For every $f:\erren\tende\erre$ compactly supported continuous function, the function 
$$u_f(x):=\int_\erren \Gamma(x,y)f(y)\ dy,\qquad x\in\erren.$$
is an asymptotic average solution to 
$$\elle_a u=-f\inn\erren.$$
\end{theorem}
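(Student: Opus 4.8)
The plan is to deduce the statement from the exact Poisson--Jensen identity \eqref{PJ2}, first isolating a soft averaging principle and then removing the smoothness of the datum by approximation. The backbone is a \emph{normalizing lemma}: for any $g$ continuous at $x$ one has $\lim_{r\tende 0} N_r(g)(x)/Q_r(x)=g(x)$. The key observation is that $N_r$ is a genuine averaging operator, because on each $\elle$-ball $\Omega_\rho(x)$ the weight $\Gamma(x,y)-1/\rho$ is strictly positive; hence $Q_r(x)=N_r(1)(x)>0$ and $N_r$ is linear in $g$. Writing $g(y)=g(x)+(g(y)-g(x))$ gives $N_r(g)(x)=g(x)Q_r(x)+E_r(x)$, and, using $\Omega_\rho(x)\subseteq\Omega_r(x)$ for $\rho\le r$ together with the nonnegativity of the weight, $|E_r(x)|\le \big(\sup_{y\in\Omega_r(x)}|g(y)-g(x)|\big)\,Q_r(x)$. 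Properties (iii)--(iv), together with the smoothness of $\Gamma$ off the diagonal, force the $\elle$-balls $\Omega_\rho(x)$ to shrink to $\{x\}$ as $\rho\tende 0$; so the supremum tends to $0$ by continuity and $E_r(x)/Q_r(x)\tende 0$.

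When $f\in\supcom$, property (ii) gives $\elle u_f=-f$ pointwise, and hypoellipticity of $\elle$ then makes $u_f$ smooth; since $\overline{\Omega_r(x)}$ is compact by (iv), formula \eqref{PJ2} applies verbatim and yields the exact identity
\[
M_r(u_f)(x)-u_f(x)=N_r(\elle u_f)(x)=-N_r(f)(x)\quad\text{for all }x,\ r>0.
\]
Dividing by $Q_r(x)$, letting $r\tende 0$ and applying the normalizing lemma with $g=f$ already proves $\elle_a u_f(x)=-f(x)$ in the smooth case.

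For $f$ merely continuous and compactly supported I would mollify, choosing $f_\eps\in\supcom$ with $f_\eps\tende f$ uniformly and supports inside a fixed compact $K$. For each fixed $r>0$ the smooth identity reads $M_r(u_{f_\eps})(x)-u_{f_\eps}(x)=-N_r(f_\eps)(x)$, and I let $\eps\tende 0$. The right-hand side converges to $-N_r(f)(x)$ because $|N_r(f_\eps)(x)-N_r(f)(x)|\le \|f_\eps-f\|_\infty\,Q_r(x)$. On the left, $u_{f_\eps}(x)\tende u_f(x)$, and it remains to pass to the limit inside the integral defining $M_r(u_{f_\eps})(x)$. Granting this, the exact identity $M_r(u_f)(x)-u_f(x)=-N_r(f)(x)$ persists for the continuous datum at every fixed $r$; dividing by $Q_r(x)$ and invoking the normalizing lemma once more gives $\elle_a u_f(x)=-f(x)$, which is the assertion.

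The main obstacle is exactly this last interchange of the limit $\eps\tende 0$ with the integral defining $M_r$. Since $u_{f_\eps}(y)-u_f(y)=\int_K\Gamma(y,z)\,(f_\eps(z)-f(z))\,dz$, a dominated-convergence argument requires a local uniform bound $\sup_{y\in\overline{\Omega_r(x)}}\int_K\Gamma(y,z)\,dz<\infty$, that is, upgrading the qualitative hypotheses (i)--(iv) on $\Gamma$ to a quantitative, locally uniform integrability statement. Everything else reduces to the linearity of $M_r,N_r$ and the soft averaging argument built on the normalizing lemma.
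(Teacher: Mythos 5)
Your proposal follows the same route as the paper: the exact identity $M_r(u_f)(x)-u_f(x)=-N_r(f)(x)$ is the paper's representation formula \eqref{imp!}, obtained there in the same way, by smoothing the datum and passing to the limit in the Poisson--Jensen formula \eqref{PJ1}; and your ``normalizing lemma'' is, in substance, the paper's Lemma \ref{corollary24?} (shrinking of the $\elle$-balls, via Lemma \ref{palleecipolle}) combined with the positivity of the kernel of $N_r$, which is exactly how the paper concludes in Section 4. So the architecture is right. The problem is the step you explicitly leave open, together with your diagnosis of it: you claim that the convergence $M_r(u_{f_\eps})(x)\to M_r(u_f)(x)$ would require ``upgrading the qualitative hypotheses (i)--(iv)'' to a quantitative, locally uniform integrability statement for $\Gamma$. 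As written your argument is therefore incomplete, and the diagnosis is wrong: no additional hypothesis is needed, because the required bound already follows from the standing assumptions.

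Here is the missing idea (it is precisely how the paper proves \eqref{2.5} and \eqref{2.6}). The set $\overline{\Omega_r(x)}$ is compact by Lemma \ref{cipollini}; choose a non-negative $\varphi\in\supcom$ with $\varphi\equiv 1$ on $K$. Since $\Gamma\ge 0$,
\begin{equation*}
\int_K\Gamma(y,z)\,dz\ \le\ \int_{\erren}\Gamma(y,z)\varphi(z)\,dz\ =:\ u_\varphi(y) \qquad \mbox{for every } y\in\erren .
\end{equation*}
Hypothesis (ii) asserts that $\elle u_\varphi=-\varphi$; in particular $u_\varphi$ is a genuine function on which the second-order operator $\elle$ acts and, since $\elle$ is hypoelliptic and $-\varphi\in C^{\infty}$, one has $u_\varphi\in C^{\infty}(\erren)$. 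Being continuous, $u_\varphi$ is bounded on the compact set $\overline{\Omega_r(x)}$, so
\begin{equation*}
\sup_{y\in\overline{\Omega_r(x)}}\int_K\Gamma(y,z)\,dz\ \le\ \sup_{\overline{\Omega_r(x)}}u_\varphi\ <\ \infty ,
\end{equation*}
which is exactly the majorant you declared missing. Consequently $u_{f_\eps}\to u_f$ uniformly on $\overline{\Omega_r(x)}$, and since the kernel of $M_r$ is non-negative with total mass $M_r(1)(x)=1$ (apply \eqref{PJ1} to $u\equiv 1$), you get
\begin{equation*}
\bigl|M_r(u_{f_\eps})(x)-M_r(u_f)(x)\bigr|\ \le\ \sup_{\overline{\Omega_r(x)}}|u_{f_\eps}-u_f|\ \ttende\ 0 ,
\end{equation*}
closing the only gap in your argument without strengthening (i)--(iv). (The same device yields the paper's uniform bound \eqref{2.2} on $Q_r$; for your pointwise estimate on $N_r(f_\eps)-N_r(f)$, hypothesis (i) alone already guarantees $Q_r(x)<\infty$.)
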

 Pizzetti proved that the Newtonian potentials of continuous bounded functions are solutions of the classical  Poisson-Laplace equations in an asymptotic sense (\cite[Equation (8)]{pizzetti}, see also \cite [Theorem A]{kogoj_lanconelli_pizzetti}). He deduced this result from his celebrated formula (\cite[Equation (A)]{pizzetti}, see also  \cite[Part IV, Chapter 3, pp. 287–288]{CH}) where the spherical integral mean in $\erre^3$ of a smooth function over an Euclidean ball is expressed
 as a power series with respect to the radius of the ball having the iterated Laplacians as coefficients.
While Da Lio and Rodino, \cite{dalio}, generalized Pizzetti’s formula to the heat operator, to the best of our knowledge the result we are considering has not been extended so far in the parabolic setting, even in the case of the heat operator.

To prove our result, we follow an analogous procedure with that used in  \cite{kogoj_lanconelli_pizzetti}.
In Section $2$ we describe some properties of the $\elle$-balls. We will exploit these properties in Section $3$ to prove a Poisson-Jensen type representation formula for the {\it  $\Gamma$-potentials} $u_f$. Thanks to this representation formula, we will be able  to prove in Section $4$ that the Poisson equations relating to the $\elle$ operators are always solvable by the
 {\it  $\Gamma$-potentials}  $u_f$  according to our definition of {asymptotic average solution}. 
 In Section $5$ we compare the notions of classical,  weak and {asymptotic average solutions}; in particular we show that our {asymptotic average solutions} are weak solutions and vice versa.
 
 \section{The $\elle$-balls: some properties}\label{superlevel}

The notion of {asymptotic average solution }
 is based on some representation formulae on the superlevel set of $\Gamma$, the $\elle$-balls defined in \eqref{lballs}.
 From the properties of the fundamental solution, it follows that $\Omega_r(x)$ is a nonempty bounded open set of $\erren$. In addition,
\begin{equation*} 
  \bigcap_{r>0} \Omega_r(x)=\{ x\}
\end{equation*}
and 
\begin{equation*} \frac{|\Omega_r(x)|}{r}\ttende 0 \ass   r\ttende 0,\end{equation*}
where $|\cdot|$ denotes its Lebesgue measure.

Moreover, we prove some lemmata involving $\elle$-ball useful in the subsequent sections.

\begin{lemma}\label{cipollini}  Given a compact subset $G$ of $\erren$ and a positive radius $r$, the set ${G}_r$
\begin{equation*}\label{cipollinichiusi}G_r:=\bigcup_{x\in G} \Omega_r(x),\end{equation*}
 is bounded. 
\end{lemma}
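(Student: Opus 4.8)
The plan is to reduce the boundedness of $G_r$ to the far-field decay of $\Gamma$ encoded in hypothesis (iv), the compactness of $G$ being exactly what allows a single threshold to work for all the centers at once. First I would use compactness: since $G$ is a compact subset of $\erren$, there is $M>0$ with $G\subseteq\{x\in\erren : |x|\le M\}$. Next, unwinding the definition \eqref{lballs}, I would rewrite the union as
\[
G_r=\bigl\{y\in\erren : \Gamma(x,y)>\tfrac{1}{r}\ \text{for some}\ x\in G\bigr\}.
\]
Thus for every $y\in G_r$ there is some $x\in G$ with $\Gamma(x,y)>\tfrac1r$, and since such an $x$ satisfies $|x|\le M$, this gives the pointwise lower bound $\sup_{|x|\le M}\Gamma(x,y)\ge\Gamma(x,y)>\tfrac1r$.

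Then I would invoke the second limit in property (iv), namely $\lim_{|y|\tende\infty}\bigl(\sup_{|x|\le M}\Gamma(x,y)\bigr)=0$. Applied with this fixed $M$, it produces a radius $R>0$ such that $\sup_{|x|\le M}\Gamma(x,y)<\tfrac1r$ whenever $|y|>R$. Comparing with the lower bound of the previous step, no point $y$ with $|y|>R$ can belong to $G_r$; hence $G_r\subseteq\{y\in\erren : |y|\le R\}$, which is exactly the assertion that $G_r$ is bounded.

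The argument is short, and the only genuinely nontrivial ingredient is hypothesis (iv): it supplies the decay of $\Gamma(x,y)$ as $|y|\tende\infty$ \emph{uniformly} with respect to $x$ in a prescribed bounded set. The step that deserves care — and the closest thing to an obstacle — is the correct pairing of this uniform-in-$x$ formulation with the existential quantifier \av for some $x\in G$\cv{} defining the union: because $G$ is compact it sits inside a single $\{|x|\le M\}$, so the supremum over $|x|\le M$ simultaneously dominates $\Gamma(x,y)$ for \emph{every} admissible center $x\in G$, and a single $R$ then works uniformly over all of them.
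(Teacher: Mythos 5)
Your proof is correct and uses exactly the same key ingredient as the paper, namely the uniform decay in hypothesis (iv) applied with $M$ chosen so that the compact set $G$ lies in $\{|x|\le M\}$. The only difference is presentational: the paper argues by contradiction with a sequence $(z_n)$ in $G_r$ tending to infinity, while you run the same estimate directly (contrapositively) to produce an explicit radius $R$ with $G_r\subseteq\{|y|\le R\}$ --- a trivial reorganization, not a different route.
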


\proof  Let's argue by contradiction and assume $G_r$ unbounded. We can find a sequence $(z_n)$ in $G_r$, 
$$|z_n|\ttende \infty.$$
As $z_n$ belongs to $G_r$,  there exists $x_n\in G$ such that $$z_n\in \Omega_r(x_n)\iff \Gamma(x_n, z_n)>\frac{1}{r}.$$
It follows that for every $n\in\enne$ 
\begin{equation*}
\frac{1}{r}<\Gamma(x_n,z_n)\le \sup_{x\in G} \Gamma(x,z_n).
\end{equation*}
If now we let $n$ go to infinity, by the assumption (iv) related to the fundamental solution $\Gamma$, 
\begin{equation*}
0< \frac{1}{r}\le \lim_{n\tende\infty}\left(  \sup_{x\in G} \Gamma(x,z_n)\right)=0
\end{equation*}
and the proof is complete.
 
 \begin{lemma}\label{palleecipolle}
  \item For every Euclidean ball, centered at any point $x\in \erren$ and with arbitrary radius $R>0$, $B(x,R)$, there exists an $\elle$-ball $\Omega_r(x)$, centered at the same $x$ and with radius $r>0$  such that 
$$\Omega_r(x)\subseteq B(x,R).$$
 \end{lemma}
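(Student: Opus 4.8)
The plan is to argue by contradiction, exploiting the monotonicity of the $\elle$-balls with respect to the radius together with the continuity of $\Gamma$ away from the diagonal. First I would record the elementary monotonicity: since $\Omega_r(x)=\{y\in\erren : \Gamma(x,y)>1/r\}$ and $1/r$ decreases as $r$ grows, the family $\{\Omega_r(x)\}_{r>0}$ is nondecreasing in $r$, i.e. $r_1\le r_2$ implies $\Omega_{r_1}(x)\subseteq\Omega_{r_2}(x)$. Fix once and for all some $r_0>0$; by the properties already recalled at the beginning of this section, $\Omega_{r_0}(x)$ is a bounded set.

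Suppose, for contradiction, that the conclusion fails, so that $\Omega_r(x)\not\subseteq B(x,R)$ for every $r>0$. Choose a sequence $r_n\downarrow 0$ with $r_n\le r_0$, and for each $n$ pick a point $y_n\in\Omega_{r_n}(x)\meno B(x,R)$. Thus $\Gamma(x,y_n)>1/r_n$ and $|y_n-x|\ge R$ for every $n$. By monotonicity $y_n\in\Omega_{r_0}(x)$, which is bounded, so the sequence $(y_n)$ is bounded and, by Bolzano--Weierstrass, admits a subsequence $y_{n_k}\ttende y^{*}$. Since $|y_{n_k}-x|\ge R$ for all $k$, the limit satisfies $|y^{*}-x|\ge R>0$; in particular $y^{*}\neq x$.

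Now I would derive the contradiction. On the one hand $\Gamma(x,y_{n_k})>1/r_{n_k}\ttende+\infty$, so $\Gamma(x,y_{n_k})\ttende+\infty$. On the other hand the point $(x,y^{*})$ lies off the diagonal $\{x=y\}$, where $\Gamma$ is smooth and hence continuous, so $\Gamma(x,y_{n_k})\ttende\Gamma(x,y^{*})\in\erre$. These two statements are incompatible, and this contradiction shows that $\Omega_r(x)\subseteq B(x,R)$ for some $r>0$; by the monotonicity above the same inclusion then holds for all sufficiently small $r$, which proves the lemma.

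The argument is essentially topological, and the only point requiring care is to guarantee that the cluster point $y^{*}$ does not coincide with $x$: this is precisely what allows one to invoke the continuity of $\Gamma$ rather than its blow-up near the diagonal. The separation $y^{*}\neq x$ is secured by the uniform bound $|y_n-x|\ge R$ forced by the assumed failure, while the boundedness needed to extract a convergent subsequence comes from the monotonicity of the $\elle$-balls together with the already established boundedness of the single ball $\Omega_{r_0}(x)$ (the case $G=\{x\}$ of Lemma~\ref{cipollini}). I would expect no further obstacle.
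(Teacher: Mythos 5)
Your proof is correct and follows essentially the same contradiction argument as the paper: pick $y_n\in\Omega_{r_n}(x)\meno B(x,R)$ with $r_n\downarrow 0$, pass to a convergent subsequence with limit $y^*\neq x$, and contradict the finiteness of $\Gamma(x,y^*)$ (continuity off the diagonal) with the blow-up $\Gamma(x,y_n)>1/r_n\ttende\infty$. The only cosmetic difference is that you obtain boundedness of $(y_n)$ from the monotonicity of the $\elle$-balls together with the boundedness of a single ball $\Omega_{r_0}(x)$, whereas the paper invokes the decay of $\Gamma$ at infinity (assumption (iv)) directly --- the same underlying fact.
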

 \proof  We argue again by contradiction and assume that there exists $R>0$ such that, for every $r>0$, $\Omega_r(x)\nsubseteq B(x,R)$. Choose a sequence $(r_n)$ of real positive numbers decreasing to $0$.  Then, there exists $$y_n\in\Omega_{r_n}(x) \mbox{\ such that\ } y_n\notin B(x,R).$$ 
This means
$$\Gamma(x,y_n)>\frac{1}{r_n}\andd y_n\notin B(x,R).$$
 By the assumption (iii) that we require on $\Gamma$,
$\Gamma(x,y)\ttende 0$ as $y\ttende  \infty$. Consequently, the sequence $(y_n)$ has to be bounded. We assume 
$$\lim_{n\to  \infty} y_n=y^\ast$$ for a suitable $y^\ast\in\erren.$  Then  $y^\ast\notin B(x,R)$ so that, being $y^*\neq x$,  $\Gamma(x,y^\star)< \infty.$ On the other hand, $$\Gamma(x,y^\ast)=\lim_{n\tende \infty} \Gamma(x,y_n)\geq \lim_{n\tende \infty}  
\frac{1}{r_n}= \infty.$$ This contradiction concludes the proof.

\section{A representation formula}
Let $K$ be a compact subset of $\erren$ and let $f:\erren\ttende\erre$ be a continuous function with $\supp f \subseteq K$.  Define 
$$u_f(x):=\int_\erren \Gamma(x,y)f(y)\ dy,\qquad x\in\erren.$$

We want to prove that $u_f$ satisfies the following Poisson--Jensen type representation formula for every $x\in\erren$:
\begin{equation} \label{imp!} 
u(x)=M_r(u)(x)+N_r(f)(x),
\end{equation}
where $M_r$ and $N_r$ are defined as in \eqref{PJ2}.

We start the proof  by choosing  a sequence  of smooth functions $(f_p)$ in $\erren$  such that
$\supp f_p\subseteq K$ for every $p\in\enne$ and 
$$\sup_K |f_p-f|\ttende 0\ass p\ttende\infty.$$
Then, we define 
\begin{eqnarray*}
u_{f_p}(x)=\int_\erren \Gamma(x,y)f_p(y)\ dy = \int_K\Gamma(x,y) f_p(y)\ dy.
\end{eqnarray*} 
We observe at first that
$$u_{f_p}\in C^\infty (\erren,\erre) \mbox{  for every  } p\in\enne.$$
Moreover, 
$$\elle u_{f_p}=-f_p.$$
Then, 
 the Poisson--Jensen-type representation formula \eqref{PJ1} holds for $u_{f_p}$ and one has 
\begin{eqnarray*}  
u_{f_p}(x)&=&M_r(u_{f_p})(x)-N_r(\elle u_{f_p})(x)\\&=&M_r(u_{f_p})(x)+N_r(f_p)(x),
\end{eqnarray*}
for every $p\in\enne.$

By the Lebesgue dominated convergence Theorem, for every $x\in\erren,$
\begin{eqnarray*}
\lim_{p\tende  \infty} u_{f_p}(x) =\int_K \Gamma(x,y)\lim_{p\tende\infty} f_p(y)\ dy = u_f(x).
\end{eqnarray*} 
So, in order to have the  \eqref{imp!}, it remains to show that,  for every $x\in\erren$,
\begin{eqnarray}\label{2.5} 
\lim_{p\tende \infty} M_r(u_{f_p})(x)=M_r(u_f)(x)
\end{eqnarray} 
and
\begin{eqnarray}\label{2.6} 
\lim_{p\tende  \infty} N_r(f_p)(x)=N_r(f)(x).
\end{eqnarray}

\proof[Proof of \eqref{2.5}]

We observe that, for every compact set $G\subseteq\erren$,
\begin{equation*}
\sup_G |u_{f_p}-u_f|\ttende 0\ass p\ttende \infty. 
\end{equation*} Indeed,
\begin{eqnarray*} \sup_G|u_{f_p}-u_f| &\le& \sup_{x\in G} \left| \int_K \Gamma(x,y) (f_p(y)- f(y))\ dy \right|  \\
&\le& \sup_{K} |f_p-f| \sup_{x\in G}\int_K \Gamma(x,y)\ dy \\
&=& C(G,K) \sup_K |f_p-f|.
\end{eqnarray*} 
Furthermore, for every $x\in\erren$ we have
\begin{eqnarray*}
 | M_r(u_{f_p})(x) -M_r(u_f)(x)|&= &| M_r(u_{f_p}-u_f)(x)|\\&\le& \sup_{\Omega_r(x)} |u_{f_p}-u_f| M_r(1)(x)\\&=&  \sup_{\Omega_r(x)} |u_{f_p}-u_f|\\&\le&  \sup_{\overline {\Omega_r(x)}} |u_{f_p}-u_f|.
\end{eqnarray*} 
So, as ${\overline {\Omega_r(x)}}$ is compact (see Lemma \ref{cipollini}), when $p$ goes to  $ \infty$ the right hand side goes to zero  and 
\begin{eqnarray*} | M_r(u_{f_p})(x) -M_r(u_f)(x)|\ttende 0.\end{eqnarray*}

\proof[Proof of \eqref{2.6}] We observe at first that if $G$ is a compact subset of $\erren$ and  $r>0$, there exists a positive constant $C(r,G)$ such that
\begin{equation}\label{2.2} 
\sup_{x\in G} Q_r(x)\le C(r,G).
\end{equation}
Indeed for every $x\in G$ we have

\begin{eqnarray*} \nonumber
 Q_r(x) &=& \frac{1}{r} \int_0^r \left( \int_{\Omega_\rho(x)} \Gamma(x,y) - \frac{1}{\rho} \  dy \right)\ d\rho \\  \nonumber
 &\le& \frac{1}{r} \int_0^r \left( \int_{\Omega_\rho(x)} \Gamma(x,y)  \  dy \right) \ d\rho \\
 &\le& \frac{1}{r} \int_0^r \left( \int_{G_\rho} \Gamma(x,y)  \  dy \right) \ d\rho,\end{eqnarray*}
where $G_\rho$ is the bounded set defined in Lemma \ref{cipollini}.
Take now a non-negative $\varphi \in C_0^\infty(\erren,\erre)$ such that $\varphi=1$ on $G_\rho$. For every $x\in G$ we have 
\begin{eqnarray*} \int_{G_\rho} \Gamma(x,y)\ dy &\le& \int_{\erren} \varphi(y)\Gamma(x,y)\ dy 
\\ &\le&\sup_{x\in G}  \int_{\erren} \varphi(y)\Gamma(x,y)\ dy 
\\ &=& C(\varphi,G).
\end{eqnarray*}
Thus,
\begin{eqnarray*}  \sup_{x\in G} Q_r(x) \le C(\varphi,G).\end{eqnarray*}
Now, for every $x\in\erren$, we have 
\begin{eqnarray*} | N_r(f_p)(x) -N_r(f)(x)|&\le&| N_r(f_p-f)(x)| \\&\le& \sup_{K} |f_p-f| N_r(1)(x).
\end{eqnarray*} 
Then, for every compact set $G\subseteq\erren$, recalling that we defined $Q_r(x)=N_r(1)(x)$, 
\begin{eqnarray*} \sup_G | N_r(f_p) -N_r(f)|&\le& \sup_{K} |f_p-f| \sup_{x\in G} |Q_r(x)|\\ &\le& C(\varphi, G) \sup_K|f_p-f|.
\end{eqnarray*} 
So we have proved that $(N_r(f_p))$ is uniformly convergent to $N_r(f)$ on every compact subset of $\erren$ and, in particular,   that \eqref{2.6} holds.

\section{A Pizzetti-type theorem} In this section we prove our main result (Theorem \ref{mainth}).
It will follow easily by the representation formula of the previous section and the following result.

\begin{lemma}\label{corollary24?} Let $f:\erren\ttende\erre$ be a continuous function. Then, for every $x\in\erren,$ 
$$\sup_{y\in\Omega_r(x)} |f(y)-f(x)| \ttende 0 \ass r\ttende 0.$$
\end{lemma}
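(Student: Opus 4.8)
The plan is to exploit the fact that the $\elle$-balls shrink to the single point $x$ as $r\tende 0$, so that the statement reduces to the continuity of $f$ at $x$. First I would fix $x\in\erren$ and an arbitrary $\eps>0$. Since $f$ is continuous at $x$, there is a radius $\delta>0$ such that $|f(y)-f(x)|<\eps$ for every $y\in B(x,\delta)$, the Euclidean ball of center $x$ and radius $\delta$.

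Next I would invoke Lemma \ref{palleecipolle}: applied to this ball $B(x,\delta)$, it produces a radius $r_0>0$ with $\Omega_{r_0}(x)\subseteq B(x,\delta)$. The key structural observation is that the family $\{\Omega_r(x)\}_{r>0}$ is nested and monotone increasing in $r$: from the definition \eqref{lballs}, $\Omega_r(x)=\{y:\Gamma(x,y)>1/r\}$, so if $0<r\le r_0$ then $1/r\ge 1/r_0$ and hence $\Omega_r(x)\subseteq\Omega_{r_0}(x)$. Combining these inclusions, for every $0<r\le r_0$ I obtain
$$\Omega_r(x)\subseteq\Omega_{r_0}(x)\subseteq B(x,\delta).$$

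Finally, for every such $r$ and every $y\in\Omega_r(x)$ the continuity estimate gives $|f(y)-f(x)|<\eps$, whence $\sup_{y\in\Omega_r(x)}|f(y)-f(x)|\le\eps$. Since $\eps>0$ was arbitrary, this shows that $\sup_{y\in\Omega_r(x)}|f(y)-f(x)|\tende 0$ as $r\tende 0$, which is exactly the claim.

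I do not expect a genuine obstacle here: the only point requiring care is to pair Lemma \ref{palleecipolle} with the monotonicity of the superlevel sets $\Omega_r(x)$ in $r$. Together these guarantee that every sufficiently small $\elle$-ball is trapped inside the Euclidean neighbourhood on which continuity controls the oscillation of $f$; the nesting is what upgrades the single inclusion coming from Lemma \ref{palleecipolle} into a uniform bound valid for \emph{all} small radii.
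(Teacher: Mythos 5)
Your proof is correct and follows essentially the same route as the paper: continuity of $f$ at $x$ gives a Euclidean ball controlling the oscillation, Lemma \ref{palleecipolle} traps some $\elle$-ball $\Omega_{r_0}(x)$ inside it, and the monotonicity $\Omega_r(x)\subseteq\Omega_{r_0}(x)$ for $r\le r_0$ extends the bound to all small radii. The only difference is cosmetic: you spell out the monotonicity of the superlevel sets explicitly, whereas the paper uses it implicitly in the chain of inequalities between the suprema.
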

\begin{proof} Since $f$ is continuous at $x$, for every $\eps >0$ there exists $R>0$ such that 
$$\sup_{y\in B(x,R)} |f(y)-f(x)| <\eps.$$
By Lemma \ref{palleecipolle}, there exists $r_0>0$ such that $\Omega_{r_0}(x)\subseteq B(x,R).$ Then, for every $r<r_0$, 
$$\sup_{y\in\Omega_r(x)} |f(y)-f(x)| \le \sup_{y\in\Omega_{r_0}(x)} |f(y)-f(x)| \le \sup_{y\in B(x,R)} |f(y)-f(x)| <\eps.$$
We have so proved that for every $\eps>0$ there exists $r_0>0$ such that 
$$\sup_{y\in \Omega_r(x)} |f(y)-f(x)| <\eps$$ for every $r<r_0.$  Hence,
$$\lim_{r\tende0 } \left( \sup_{y\in\Omega_r(x)} |f(y)-f(x)|\right) = 0.$$
\end{proof}

Finally, we can give the proof of Theorem \ref{mainth}. Let $f:\erren\ttende\erre$ be a continuous function with compact support and 
$$u(x)=\int_\erren \Gamma(x,y)f(y)\ dy,\qquad x\in\erren.$$ We use the representation formula 
\eqref{imp!} and for every $x\in\erren$ we have 

\begin{eqnarray*} 
\frac{M_r(u)(x)-u(x)}{Q_r(x)} =  - \frac{N_r(f)(x)}{Q_r(x)}  
\end{eqnarray*} so that, as $f(x)$ is constant with respect to  $y\in \Omega_r(x)$,
\begin{eqnarray*} 
\left|\frac{M_r(u)(x)-u(x)}{Q_r(x)} +  f(x) \right| &= & 
\frac{1}{Q_r(x)} \left| {N_r(f(x)- f})(x)\right| \\ &\le &  \sup_{y\in\Omega_r(x)} |f(x)-f(y)|.
 \end{eqnarray*} 
 
 Letting $r$  go to $0$, by Lemma \ref{corollary24?} the left hand side goes to zero 
 and we obtain
 \begin{eqnarray*} 
 \lim_{r\tende 0} \frac{M_r(u)(x)-u(x)}{Q_r(x)} = -  f(x) 
 \end{eqnarray*} 
 for every $x\in\erren$. 
 
\section{Classical, weak and asymptotic average solutions}

It is well known that the Poisson equation $\elle u(x)=-f(x)$ with continuous data $f$ does not admit, in general, a solution in the classical sense. 
Clearly, if the Poisson problem admitted a solution $u$ in the classical sense, by the Poisson--Jensen-type formula \eqref{PJ1} and the same arguments of the previous section, one can see that  $u$ verifies the following 
\begin{eqnarray*} 
 \lim_{r\tende 0} \frac{M_r(u)(x)-u(x)}{Q_r(x)} = -  f(x) 
 \end{eqnarray*} 
and so it is also a solution in the asymptotic sense.

It is actually this formula that suggests the definition of asymptotic average solution for less regular functions whenever the above limit exists.
 
When the solution $u$ is a continuous function with compact support, the notion of weak solution and  {asymptotic average solution} are equivalent. In particular, we have the following theorem.
\begin{theorem}\label{consequence} Let $f,u:\erren\ttende\erre$ be  compactly supported continuous functions. Then, 
$$\elle_a u =- f  \inn\erren$$ if and only if 
$$\elle u = - f\inn \mathcal{D}'(\erren).$$
\end{theorem}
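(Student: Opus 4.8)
The plan is to prove the two implications separately, using the main Theorem \ref{mainth} and its underlying representation formula to bridge the asymptotic notion and the distributional one. The key object linking both sides is the $\Gamma$-potential: for a compactly supported continuous $g$, property (ii) of the fundamental solution tells us that $u_g(x)=\int_\erren \Gamma(x,y)g(y)\,dy$ satisfies $\elle u_g = -g$ in $\mathcal{D}'(\erren)$, while Theorem \ref{mainth} tells us that the very same $u_g$ satisfies $\elle_a u_g = -g$ pointwise. Thus the potential is simultaneously a weak and an asymptotic average solution to the same equation, and the strategy is to reduce an arbitrary $u$ to a potential by exploiting uniqueness.

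First I would prove the direction $\elle u = -f$ in $\mathcal{D}'(\erren) \Rightarrow \elle_a u = -f$. Set $w:=u-u_f$, where $u_f$ is the $\Gamma$-potential of $f$. By property (ii), $\elle u_f = -f$ distributionally, so $\elle w = 0$ in $\mathcal{D}'(\erren)$; since $\elle$ is hypoelliptic, $w$ is a smooth classical $\elle$-harmonic function. For such a smooth $w$ the Poisson--Jensen formula \eqref{PJ1} applies with $\elle w = 0$, giving $w(x)=M_r(w)(x)$, hence $\elle_a w(x)=\lim_{r\to 0}\frac{M_r(w)(x)-w(x)}{Q_r(x)}=0$. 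Meanwhile Theorem \ref{mainth} gives $\elle_a u_f = -f$. Because $M_r$ is linear in its argument and $Q_r$ is a common positive denominator, $\elle_a$ is linear wherever both limits exist, so $\elle_a u = \elle_a w + \elle_a u_f = 0 - f = -f$, as required.

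For the converse, $\elle_a u = -f \Rightarrow \elle u = -f$ in $\mathcal{D}'(\erren)$, I would again compare $u$ with the potential $u_f$. By Theorem \ref{mainth}, $\elle_a u_f = -f$, so the difference $w:=u-u_f$ satisfies $\elle_a w = 0$ pointwise by the linearity of $\elle_a$. The task becomes showing that a compactly supported continuous function $w$ with $\elle_a w \equiv 0$ is distributionally $\elle$-harmonic, and then that $\elle w = 0$ in $\mathcal{D}'(\erren)$ forces $\elle u = \elle u_f = -f$ distributionally. The natural route is to test against $\varphi\in\supcom$: one wants to show $\int_\erren w\,\elle^{*}\varphi\,dy = 0$, where $\elle^{*}$ is the formal adjoint. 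Here the asymptotic operator must be related back to the distributional action of $\elle$, and the cleanest device is a Pizzetti-type averaged identity combined with a representation of $\elle^*\varphi$ through the fundamental solution.

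The main obstacle is precisely this last reduction: passing from the pointwise vanishing of the \emph{asymptotic} operator $\elle_a w$ to the vanishing of the \emph{distributional} $\elle w$, without assuming $w$ is a priori smooth. The difficulty is that $\elle_a$ is defined through a delicate normalized limit of averages over the shrinking $\elle$-balls, and one must justify integrating the pointwise relation $M_r(w)(x)-w(x)=o(Q_r(x))$ against a test function $\varphi$ and interchanging the limit $r\to 0$ with the integration over the support of $\varphi$. I expect this to require the uniform estimate $\sup_{x\in G}Q_r(x)\le C(r,G)$ from the proof of \eqref{2.6} together with the measure bound $|\Omega_r(x)|/r\to 0$ to dominate the averaged remainder and apply dominated convergence; once the interchange is licensed, the adjoint identity in the Poisson--Jensen structure converts $\lim_r \frac{1}{Q_r}(M_r(w)-w)$ tested against $\varphi$ into $\int w\,\elle^*\varphi$, yielding $\elle w = 0$ in $\mathcal{D}'(\erren)$ and hence the claim.
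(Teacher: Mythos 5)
Your first direction (weak $\Rightarrow$ asymptotic) is correct and is essentially the paper's argument: hypoellipticity makes $w=u-u_f$ smooth and classically $\elle$-harmonic, the Poisson--Jensen formula \eqref{PJ1} gives $w=M_r(w)$, hence $\elle_a w=0$, and linearity of the limit together with Theorem \ref{mainth} yields $\elle_a u=-f$.

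The converse, however, contains a genuine gap, and it is exactly the step you yourself flag as ``the main obstacle.'' From $\elle_a w=0$ pointwise you propose to test against $\varphi\in\supcom$, interchange the limit $r\to 0$ with integration by dominated convergence (using $\sup_{x\in G}Q_r(x)\le C(r,G)$ and $|\Omega_r(x)|/r\to 0$), and then invoke an ``adjoint identity'' converting the tested averages into $\int w\,\elle^*\varphi$. Neither half of this can be carried out as stated. First, pointwise existence of $\lim_{r\to 0}\bigl(M_r(w)(x)-w(x)\bigr)/Q_r(x)$ provides no majorant: dominated convergence would require $\sup_{0<r<r_0}\bigl|M_r(w)(x)-w(x)\bigr|/Q_r(x)$ to be controlled by an integrable function of $x$, and nothing in the hypotheses (continuity of $w$ plus existence of the pointwise limit at each $x$, with no uniformity) gives this; the estimate \eqref{2.2} controls only the denominator, not the ratio. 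Second, there is no adjoint identity of the kind you invoke: the kernel of $M_r$ lives on the superlevel set $\Omega_r(x)$, is not symmetric in $(x,y)$, and the normalization $Q_r(x)$ depends on $x$, so $\int \varphi(x)\,\bigl(M_r(w)(x)-w(x)\bigr)/Q_r(x)\,dx$ does not rewrite as $\int w\,T_r\varphi$ for any operator $T_r$ converging to the formal adjoint applied to $\varphi$. The passage from pointwise asymptotic information on a merely continuous function to distributional (in fact classical) harmonicity is a Privaloff-type theorem and cannot be obtained by soft limit interchanges. The paper resolves precisely this point by citing Corollary 3.4 of Guti\'errez--Lanconelli \cite{gl2004}: a continuous function with $\elle_a w\equiv 0$ is $C^\infty$ and classically $\elle$-harmonic. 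The proof of that result is potential-theoretic: it rests on the Picone maximum principle (granted by the barrier $h(x)=K-e^{\lambda x_1}$, condition 2.1 of \cite{gl2004}) and on the existence of a basis of regular sets for the Dirichlet problem (condition 2.2, via Bony's argument), i.e., on comparison with solutions on small regular sets --- a mechanism entirely absent from your sketch. Without this ingredient, or an equivalent substitute, your second implication is not a proof.
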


\proof

By the definition of  {asymptotic average solution} in Theorem \ref{mainth},
$$\elle_a u=-f \inn\erren$$ if and only if 
$$\elle_a(u-u_f)=0\inn\erren.$$
Then, thanks to Corollary 3.4 in  \cite{gl2004}, we know that $u-u_f\in C^\infty(\erren,\erre)$ and 
$$\elle(u-u_f)=0$$
in the classical sense (and vice versa). Since $\elle$ is hypoelliptic, this is equivalent to
$$\elle(u-u_f)=0 \inn  \mathcal{D}'(\erren),$$that, in turn, is equivalent to 
\begin{equation*}\label{1.3}
\elle(u)=\elle(u_f) \inn  \mathcal{D}'(\erren).
\end{equation*}
On the other hand, as $\Gamma$ is a fundamental solution of $\elle$, $\elle(u_f)=-f\inn \mathcal{D}'(\erren)$.  Thus, 
$$\elle u=-f  \inn  \mathcal{D}'(\erren).$$ 
The proof is completed in view of the following remark.
\begin{remark}
Corollary 3.4 in  \cite{gl2004} requires that two conditions (2.1 and  2.2 in that paper), that are related to our operators $\elle$, be satisfied.  
The first one, Condition 2.1,  requires that
{\it for every bounded open set $V$ there exists a function  $h\in C^2(V)$ such that  $\elle h <0$ in  $V$ and $\inf_{V}  h >0.$  }
In our case, it is enough to choose two  suitable positive real constants, $K$ and
$\lambda$, and a function  $h(x):= K - e^{\lambda x_1}$. It is well known that this condition implies the following maximum principle due to Picone.\\
{\it For every  $u\in C^2 (V)$ such that \[\elle u\geq 0\quad\mbox { in } 
V,\quad\limsup_{x\tende y }u(x) \le 0\quad \forall y \in\partial V, \]  we have $u\le 0$ in  $V$.}

Condition 2.2, i.e. {\it the existence of a basis of the Euclidean topology formed by regular sets},
can be proved as in \cite[Corollarie 5.2]{Bony}, see also \cite[Proposition 7.1.5] {BLU}. The tools used in its proof are the hypoellipticity, the non totally degeneracy of the operator $\elle$  and the aforementioned Picone maximum principle. 
\end{remark}

\section*{Acknowledgement} The author is a member of the Gruppo Nazionale per l'Analisi Matematica, la Probabilit\`a e le loro Applicazioni (GNAMPA) of the Istituto Nazionale di Alta Matematica (INdAM). 

\section*{Declarations}  

\begin{itemize}

\item[-] Conflict of interest: The authors declare no conflict of interest.
\item[-]  Data availability:  Data sharing not applicable to this article as no datasets were generated or analysed during
the current study.

\item[-]   Funding: No funding was received.

\end{itemize}

\end{document}